\newtheorem{theorem}{\bf Theorem}[section]
\newtheorem{lemma}[theorem]{Lemma}
\newtheorem{Def}[theorem]{Definition}
\newtheorem{corollary}[theorem]{Corollary}
\begin{document}
\begin{spacing}{1.1}
\title{A homogeneous polynomial associated with general hypergraphs and its applications \thanks{This research is supported by the National
Natural Science Foundation of China (Grant No.~11471077).}}
\author{ Yuan Hou$^{a, b}$\thanks {E-mail addresses: houyuan@fzu.edu.cn(Y. Hou),
  anchang@fzu.edu.cn(A. Chang).}, \, An Chang$^b$, \, Lei Zhang$^b$\\
\small$^a$Department of Computer Engineering, Fuzhou University Zhicheng College,\\
\small Fuzhou, Fujian, P.R. China\\
\small$^b$Center for Discrete Mathematics and Theoretical Computer Science,\\
\small Fuzhou University, Fuzhou, Fujian, P.R. China}
\date{}
\maketitle
{\bf Abstract}\,: 
In this paper, we define a homogeneous polynomial for a general hypergraph, and establish a remarkable connection between clique number and the homogeneous polynomial of a general hypergraph. For a general hypergraph, we explore some inequality relations among spectral radius,
clique number and the homogeneous polynomial.
We also give lower and upper bounds on the spectral radius in terms of the clique number.

{\bf AMS}\,: 15A42; 05C50.

{\bf Keywords}\,: Spectral radius, general hypergraph, Adjacency tensor, clique number

\section{Introduction}

\noindent

A general hypergraph is a pair $H=(V, E)$ consisting of a vertex set $V$ and
an edge set $E$, where each edge is a subset of $V$. If $H'=(V', E')$ is a hypergraph such that $V'\subseteq V$ and $E'\subseteq E$, then $H'$ is called a subhypergraph of $H$. For a vertex $i$, we denote the family of edges containing $i$ by $E(i)$. Two vertices $i$ and $j$ are said to be adjacent, denoted by $i\sim j$, if there
exists an edge $e$ such that $\{ i,j \}\subseteq e$. Otherwise, we call $i$ and $j$ are nonadjacent.

The set $R=\{|e|: e\in E \}$ is called the set of edge types of $H$. We also say that $H$ is an $R$-graph.
In particular, if $R$ contains only one positive integer $r$, then an $\{r\}$-graph is just an $r$-uniform hypergraph, which is simply written
as $r$-graph. Consequently, a $2$-graph is referred to a usual graph.  A hypergraph is non-uniform
if it has at least two edge types. For a vertex $i$, let $R(i)$ be the multiset of edge types in $E(i)$. For example, assume $E(1)=\{\{1,3\}, \{1,2,3\}, \{1,3,4\}\}$, then $R(1)=\{2,3,3\}$.
The rank of $H$, denoted by $rank(H)$, is the maximum cardinality of the edges in the hypergraph. For example, if $R=\{1,4\}$, we say that $H$ is a $\{1,4\}$-graph with $rank(H)=4$.

For an integer $n$, let $[n]$ denote the set $\{1,2, \cdots, n\}$. For a set
$S$ and integer $i$, let $\left(
  \begin{array}{ccc}
    S\\
    i
  \end{array}
\right)$
be the family of all $i$-subsets of $S$. An $R$-graph $H$ with vertex set $[n]$ and edge set $\bigcup\limits_{i\in R}
\left(
  \begin{array}{ccc}
    [n]\\
    i
  \end{array}
\right)$ is called a complete $R$-graph. A complete $R$-subgraph in $H$ is called a clique of $H$. A clique is said
to be maximal if it is not contained in any other clique, while it is called maximum if
it has maximum cardinality. The clique number of a hypergraph $H$, denoted by $\omega(H)$, is
defined as the number of vertices of a maximum clique. In other words, the clique number of a hypergraph $H$ is the number of vertices of its maximum complete
$R$-subgraph in $H$. In particular, when the edge set of $H$ is empty, we delimit $\omega(H)=1$ for the purpose of complying with mathematical logic.

In 1967, Wilf \cite{Wilf1967} first used spectral graph theory for computing
bounds on the chromatic number of graphs.
\begin{theorem}[\cite{Wilf1967}]\label{Wilf}
 Let $G$ be a 2-graph with chromatic number $\chi(G)$ and spectral
radius $\rho(G)$. Then
$$\chi(G)\leq \rho(G)+1. $$
\end{theorem}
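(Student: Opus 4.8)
The plan is to prove the bound $\chi(G)\le\rho(G)+1$ by combining three standard ingredients: a structural reduction to a critical subgraph, a lower bound on the spectral radius in terms of minimum degree, and the monotonicity of the spectral radius under passing to subgraphs. Write $k=\chi(G)$.

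First I would reduce to a well-behaved subgraph. Choose a $k$-critical subgraph $H\subseteq G$, that is, a subgraph with $\chi(H)=k$ that is minimal in the sense that deleting any vertex drops the chromatic number below $k$. The key claim here is that every vertex of a $k$-critical graph has degree at least $k-1$. Indeed, if some vertex $v$ had $\deg_H(v)\le k-2$, then by criticality $H-v$ admits a proper $(k-1)$-coloring; since $v$ sees at most $k-2$ colors among its neighbors, one of the $k-1$ colors is free for $v$, yielding a proper $(k-1)$-coloring of $H$ and contradicting $\chi(H)=k$. Hence $\delta(H)\ge k-1$.

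Next I would bound $\rho(H)$ from below by $\delta(H)$. Using the Rayleigh quotient with the all-ones vector $\mathbf{1}$,
$$
\rho(H)\;\ge\;\frac{\mathbf{1}^{\top}A(H)\,\mathbf{1}}{\mathbf{1}^{\top}\mathbf{1}}\;=\;\frac{1}{|V(H)|}\sum_{v}\deg_H(v)\;\ge\;\delta(H)\;\ge\;k-1,
$$
where $A(H)$ is the adjacency matrix of $H$. Finally I would invoke monotonicity: since $H$ is a subgraph of $G$, the Perron--Frobenius theorem applied to the nonnegative matrices $A(H)$ and $A(G)$ gives $\rho(H)\le\rho(G)$. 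Chaining the inequalities produces $\rho(G)\ge\rho(H)\ge k-1=\chi(G)-1$, which rearranges to the claimed $\chi(G)\le\rho(G)+1$.

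The main obstacle is the first step: establishing that the critical subgraph has minimum degree at least $k-1$, since this is where the combinatorial content (the interplay between colorability and the greedy extension of a partial coloring) lives. The spectral pieces—the Rayleigh-quotient lower bound and subgraph monotonicity—are routine consequences of standard nonnegative-matrix theory and require only that one work with a subgraph realizing the chromatic number rather than with $G$ directly.
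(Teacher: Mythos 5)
Your proposal is correct. Note, however, that the paper does not prove this statement at all: it is Theorem 1.1, quoted verbatim from Wilf's 1967 paper as a known classical result that motivates the hypergraph generalizations developed later, so there is no in-paper argument to compare against. What you have written is essentially Wilf's original argument reconstructed in full: pass to a $\chi(G)$-critical subgraph $H$, use the standard fact that a $k$-critical graph has minimum degree at least $k-1$, lower-bound $\rho(H)$ by the average degree via the Rayleigh quotient (which dominates $\delta(H)$), and finish with the Perron--Frobenius monotonicity $\rho(H)\le\rho(G)$ for entrywise-dominated nonnegative symmetric matrices. All four steps are sound: the critical subgraph exists by iteratively deleting vertices whose removal preserves the chromatic number, the greedy-extension argument for the degree bound is airtight, and the two spectral ingredients are standard. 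So your write-up supplies a complete, self-contained proof of a result the paper only cites.
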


By above Wilf's result, it is immediately to obtain that $\omega(G)\leq \rho(G)+1$. Later in 1986, Wilf \cite{Wilf1986} introduced a lower spectral bound on clique number which was inspired by an elegant result
due to Motzkin and Straus \cite{Motzkin1965}.

In 1965, Motzkin and Straus gave an answer to the
following problem proposed in \cite{Macdonald1963}.

{\it Given a graph $G = (V, E)$ with vertex set $V=\{1,2, \cdots, n\}$. Let
$S$ be the simplex in $\mathbb{R}^n$ given by $x_{i}\geq {0}, \sum\limits_{i=1}^{n} {x_{i}}=1$. What is $\max\limits_{x\in S}\sum\limits_{\{i,j\}\in{E}}x_{i}x_{j}$? }

Let $ L(G,x)=\sum\limits_{\{i,j\}\in{E}}x_{i}x_{j}$. The Motzkin and Straus Theorem establishes a link between the problem
of finding the clique number of a graph $G$ and the problem of optimizing a homogeneous polynomial $L(G,x)$ of $G$ over the simplex $S=\{ (x_{1},x_{2},\cdots,x_{n})|\sum\limits_{i=1}^{n} {x_{i}}=1, x_{i}\geq {0}\,\, for\,\, i=1,2,\cdots, n. \}$.

\begin{theorem}[Motzkin and Straus Theorem \cite{Motzkin1965}]\label{Motzkin-Straus}
 Let $G$ be a 2-graph with clique number $\omega(G)$,
and $x^{*}$ a maximizer of $L(G, x)$ over $S$. Then
$$L(G,x^{*})=\frac{1}{2}(1-\frac{1}{\omega(G)}). $$
\end{theorem}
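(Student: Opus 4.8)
The plan is to prove the two matching inequalities separately and combine them. First I would establish the lower bound $L(G,x^{*})\geq \frac{1}{2}(1-\frac{1}{\omega(G)})$ by exhibiting a single feasible point. Let $K$ be a maximum clique, so $|K|=\omega:=\omega(G)$, and set $x_{i}=1/\omega$ for $i\in K$ and $x_{i}=0$ otherwise. This point lies in $S$, and since every pair of vertices in $K$ is an edge, $L(G,x)=\binom{\omega}{2}\cdot\frac{1}{\omega^{2}}=\frac{1}{2}(1-\frac{1}{\omega})$. Because $x^{*}$ is a maximizer, $L(G,x^{*})$ is at least this value.

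The crux is the matching upper bound, and the step I expect to be the main obstacle is showing that some maximizer is supported on a clique. I would argue as follows. Among all maximizers of $L(G,\cdot)$ over $S$, choose one, still denoted $x^{*}$, whose support $T=\{i:x_{i}^{*}>0\}$ has minimum cardinality, and claim that $T$ induces a complete subgraph. Suppose not; then there exist nonadjacent $i,j\in T$. Fix all coordinates except $x_{i}$ and $x_{j}$, subject to the constraint $x_{i}+x_{j}=x_{i}^{*}+x_{j}^{*}=:c$. Because $\{i,j\}$ is not an edge, the product $x_{i}x_{j}$ never appears in $L$, so along this segment $L$ is affine in $x_{i}$: writing $a=\sum_{k\sim i}x_{k}^{*}$ and $b=\sum_{k\sim j}x_{k}^{*}$ (neither sum containing the other vertex, by nonadjacency), one obtains $L=(a-b)x_{i}+bc+C_{0}$ with $C_{0}$ constant. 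An affine function on an interval attains its maximum at an endpoint, so shifting all the weight onto whichever of $i,j$ carries the larger coefficient yields another point of $S$ with $L$-value at least $L(G,x^{*})$ but with strictly smaller support, contradicting minimality. Hence $T$ is a clique, and in particular $s:=|T|\leq\omega$.

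Finally, with $x^{*}$ supported on the clique $T$, every pair inside $T$ is an edge, so $2L(G,x^{*})=2\sum_{\{i,j\}\subseteq T}x_{i}^{*}x_{j}^{*}=\left(\sum_{i\in T}x_{i}^{*}\right)^{2}-\sum_{i\in T}(x_{i}^{*})^{2}=1-\sum_{i\in T}(x_{i}^{*})^{2}$, using $\sum_{i\in T}x_{i}^{*}=1$. By the Cauchy--Schwarz inequality, $\sum_{i\in T}(x_{i}^{*})^{2}\geq\frac{1}{s}\left(\sum_{i\in T}x_{i}^{*}\right)^{2}=\frac{1}{s}\geq\frac{1}{\omega}$, which gives $L(G,x^{*})\leq\frac{1}{2}(1-\frac{1}{\omega})$. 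Combining this with the lower bound yields the claimed equality. The delicate point throughout is the affine/endpoint transfer: one must verify that moving weight keeps the point inside $S$ and that the support genuinely shrinks, both of which rely crucially on the nonadjacency of $i$ and $j$ so that no $x_{i}x_{j}$ term interferes with the linearity.
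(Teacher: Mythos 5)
Your proof is correct, but note that the paper itself offers no proof of this statement: Theorem~\ref{Motzkin-Straus} is quoted as a classical result with a citation to Motzkin and Straus, so there is no in-paper argument to compare against. Judged on its own merits, your argument is the standard non-inductive proof and it is complete: the uniform weighting on a maximum clique gives the lower bound exactly; for the upper bound, the choice of a maximizer of minimal support, the affine dependence of $L$ on $x_{i}$ along the segment $x_{i}+x_{j}=c$ (which uses the nonadjacency of $i$ and $j$ exactly as you say), and the endpoint argument show that the support $T$ is a clique of size $s\leq\omega(G)$, after which $2L(G,x^{*})=1-\sum_{i\in T}(x_{i}^{*})^{2}\leq 1-\frac{1}{s}\leq 1-\frac{1}{\omega(G)}$ by Cauchy--Schwarz. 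It is worth observing that your weight-transfer device is essentially the same one this paper deploys in Case 1 of its proof of Theorem~\ref{general M-S} (the $\{m,m-1\}$-graph analogue), where all weight is shifted from a vertex $j$ to a nonadjacent vertex $i$ with $R(i)=R(j)$; the difference is that the paper, like Motzkin and Straus's original argument, combines the transfer with induction on the number of vertices, whereas you avoid induction via the extremal choice of a minimum-support maximizer. Both routes work for 2-graphs; the minimal-support formulation is slightly cleaner here, while the inductive formulation is the one that generalizes to the hypergraph setting of the paper, where the transfer step requires extra hypotheses and deleting a zero-weight vertex feeds naturally into an induction hypothesis.
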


\begin{theorem}[\cite{Wilf1986}]\label{Wilf1986}
 Let $G$ be a 2-graph with spectral
radius $\rho(G)$ and principal eigenvector $x$. Then
$${\omega(G)}\geq \frac{M^{2}}{M^{2}-\rho(G)}. $$
where $M$ is the sum of the entries of the principal eigenvector $x$.
\end{theorem}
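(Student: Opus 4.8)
The plan is to reduce the statement to the Motzkin--Straus Theorem (Theorem \ref{Motzkin-Straus}) by feeding a suitably rescaled principal eigenvector into the quadratic form $L(G,\cdot)$. Throughout I take the principal eigenvector $x=(x_1,\dots,x_n)$ to be the Perron vector of the adjacency matrix $A=A(G)$, normalized so that $\sum_{i=1}^{n}x_i^2=1$; by the Perron--Frobenius theorem its entries may be assumed nonnegative, and $Ax=\rho(G)x$.

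First I would observe that $L(G,x)$ is exactly half of the Rayleigh quotient of $A$ at $x$. Indeed, writing $L(G,x)=\sum_{\{i,j\}\in E}x_ix_j=\tfrac12\,x^{\top}Ax$ and using $Ax=\rho(G)x$ together with the normalization $\sum_i x_i^2=1$, one gets
$$L(G,x)=\tfrac12\,\rho(G)\sum_{i=1}^{n}x_i^2=\tfrac12\,\rho(G).$$
Next, since $x\ge 0$ and $\sum_i x_i=M>0$, the rescaled vector $y:=x/M$ lies in the simplex $S$. Because $L(G,\cdot)$ is a homogeneous polynomial of degree $2$, I can transfer the value just computed to $y$:
$$L(G,y)=\frac{1}{M^2}\,L(G,x)=\frac{\rho(G)}{2M^2}.$$

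Now Theorem \ref{Motzkin-Straus} bounds every value of $L(G,\cdot)$ on $S$ by its maximum, so in particular
$$\frac{\rho(G)}{2M^2}=L(G,y)\le\max_{z\in S}L(G,z)=\frac12\Bigl(1-\frac{1}{\omega(G)}\Bigr).$$
Finally I would rearrange this inequality. It yields $\rho(G)/M^2\le 1-1/\omega(G)<1$, which incidentally shows $M^2-\rho(G)>0$, so the claimed fraction is well defined and positive; isolating $1/\omega(G)$ and inverting (legitimate since both sides are positive) gives $\omega(G)\ge M^2/\bigl(M^2-\rho(G)\bigr)$, as desired.

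I do not expect a genuine obstacle here: once Motzkin--Straus is invoked the argument is essentially bookkeeping. The one point demanding care is the normalization convention---the clean form of the bound relies on $\sum_i x_i^2=1$ so that the Rayleigh quotient collapses exactly to $\rho(G)$; under a different scaling of $x$ a surviving factor $\sum_i x_i^2$ would clutter the denominator. For this reason I would fix the normalization explicitly at the outset and make sure the eigenvector is chosen nonnegative so that $y\in S$ is legitimate.
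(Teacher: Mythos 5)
Your proof is correct, and it is essentially the argument behind the cited result: the paper itself gives no proof of this theorem (it is quoted as background from \cite{Wilf1986}), but Wilf's original derivation is exactly this reduction to the Motzkin--Straus theorem, which the paper acknowledges when it says the bound ``was inspired by an elegant result due to Motzkin and Straus.'' Your bookkeeping --- the normalization $\sum_i x_i^2=1$ so that $L(G,x)=\tfrac12\rho(G)$, the rescaling $y=x/M\in S$, and the observation that the resulting inequality forces $M^2-\rho(G)>0$ before inverting --- is all sound.
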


Naturally we want similar comfort and convenience
for spectra of hypergraphs. So it
is a natural thought to generalize the Wilf's results to general hypergraphs by using the tool of the spectral hypergraph theory.

We first define a homogeneous polynomial of degree $m$ for general hypergraphs.

For a general hypergraph $H$ with $rank(H)=m$, and an edge $e=\{{l_1},{l_2}, \cdots, {l_s}\}$ with cardinality $s\leq m$, we define \begin{equation*}\label{edge polynomial}x^{e}_{m}=\sum x_{i_{1}}x_{i_{2}} \cdots x_{i_{m}},\end{equation*}
where the sum is over $i_{1}, i_{2}, \cdots, i_{m}$ chosen in all possible ways from $\{l_{1}, l_{2}, \cdots, l_{s}\}$ with at least once for each element of the set.

\begin{Def}\label{lagrange x}
Let $S =\{ (x_{1},x_{2},\cdots,x_{n})|\sum\limits_{i=1}^{n} {x_{i}}=1, x_{i}\geq {0}\,\, for\,\, i=1,2,\cdots, n. \}$ and $H$ be a  general hypergraph with $rank(H)=m$. For a vector $x$ in $S$, define \begin{equation}
L(H, x)=\sum\limits_{e\in{E}}\frac{1}{\alpha(s)}x_{m} ^{e}, \end{equation}
where $s$ is the cardinality of the edge $e$ and $\alpha(s)=\sum\limits_{k_1,\cdots,k_s\geq 1,\atop k_1+\cdots+k_s=m} \frac{m!}{k_1!k_2!\cdots k_s!}$.\\
Furthermore,
\begin{equation*}\label{L(H)} L(H)=\max\{L(H, x): x\in S \}.\end{equation*}
A vector $x\in S$ is called an optimal weighting for $H$ if $L(H)=L(H, x)$.
\end{Def}

Observe that if $H$ is a 2-graph, by Equation (1), we have $\alpha(2)=2$. For an arbitrary vector $x$ in $S$,
$$L(H, x)=\sum\limits_{\{i,j\}\in{E}}\frac{1}{2}(x_{i}x_{j}+x_{j}x_{i})=\sum\limits_{\{i,j\}\in{E}}x_{i}x_{j},$$
which is exactly the homogeneous polynomial for 2-graphs in the Motzkin-Straus theorem.

In this paper,
we apply the homogeneous polynomial methods to study some relations between the
largest $H$-eigenvalues of adjacency tensor and clique numbers of general hypergraphs.
This work is motivated by the classic results for graphs \cite{Wilf1967,Wilf1986}
and some recent results \cite{Frankl1998,Keevash2013, Mubayi2006, Rota 2009,Pelillo 2009,Talbot2002,Yuejian Peng2016,peng2016,chang2013}.
Notice that the Graph-Lagrangian of a non-uniform hypergraph is a nonhomogeneous polynomial, which is different from our definition here. In this paper, we also adopt some definitions and methods of previous studies.

The rest of this paper is organized as follows. In the next section, we present
some definitions and properties on eigenvalues of tensors and hypergraphs. Also we
give some useful tools to complete our proof. In Section 3, we attempt to explore the relationships among the homogeneous polynomial, the spectral radius, and the clique number for general hypergraphs. We also bound the spectral radius and clique number for general hypergraphs based on the results obtained.

\section{Preliminary}

\noindent

In 2005, Qi \cite{Qi2005_2} and Lim \cite{Lim2005_3} independently introduced the concept of tensor eigenvalues and the spectra of tensors.
An $m$th-order $n$-dimensional real tensor $\mathcal{T}=(\mathcal{T}_{i_{1}\cdots  i_{m}})$ consists of $n^{m}$ real entries $\mathcal{T}_{i_{1}\cdots  i_{m}}$ for $1\leq{i_{1}, i_{2},\cdots, i_{m}}\leq {n}$.
Obviously, a vector of dimension $n$ is a tensor of order $1$ and a matrix is a tensor of order $2$. $\mathcal{T}$ is called symmetric if the value of $\mathcal{T}_{i_{1}\cdots  i_{m}}$ is invariant under any permutation of its indices $i_{1}, i_{2},\cdots, i_{m}$. Given a vector $x \in R^{n}$, $\mathcal{T}x^{m}$ is a real number and $\mathcal{T} x^{m-1}$ is an $n$-dimensional vector. $\mathcal{T}x^{m}$ and the $i$th component of $\mathcal{T} x^{m-1}$ are defined as follows:
\begin{eqnarray*}
\mathcal{T}x^{m}&=&\sum_{i_{1},i_{2},\cdots, i_{m} \in [n]} \mathcal{T}_{i_{1}i_{2} \cdots i_{m}}x_{i_{1}}x_{i_{2}} \cdots x_{i_{m}}.\\
(\mathcal{T}x^{m-1})_{i}&=&\sum_{i_{2},\cdots, i_{m}\in [n]} \mathcal{T}_{ii_{2} \cdots i_{m}}x_{i_{2}} \cdots x_{i_{m}}.\end{eqnarray*}

Let $\mathcal{T}$ be an $m$th-order $n$-dimensional real tensor. For some $\lambda \in {\mathbb{C}}$, if there exists a nonzero vector $x \in \mathbb{C}^{n}$ satisfying the following eigenequation
\begin{equation*} \label{eigenequations} \mathcal{T}x^{m-1} = \lambda x^{[m-1]}. \end{equation*}
Then $\lambda$ is an eigenvalue of $\mathcal{T}$ and $x$ is its corresponding eigenvector, where $x^{[m-1]}:=({x^{m-1}_{1}},{x^{m-1}_{2}},$ $\cdots,{x^{m-1}_{n}})^{T} \in \mathbb{C}^{n}\setminus\{0\}$.

If $x$ is a real eigenvector of $\mathcal{T}$,
surely the corresponding eigenvalue $\lambda$ is real. In this case, $\lambda$ is called an $H$-eigenvalue and $x$ is called an
$H$-eigenvector associated with $\lambda$. Furthermore, if $x$ is nonnegative and real, we say $\lambda$ is an $H^{+}$-eigenvalue of $\mathcal{T}$. If $x$ is positive and real, $\lambda$ is said to be an $H^{++}$-eigenvalue of $\mathcal{T}$.
The maximal absolute value of the eigenvalues of $\mathcal{T}$ is called the spectral radius of $\mathcal{T}$, denoted by $\rho(\mathcal{T})$.

For nonnegative tensors, we have the Perron-Frobenius theorem, established as
\begin{theorem}[Perron-Frobenius theorem for nonnegative tensors \cite{Fan2015}]\label{Perron-Frobenius theorem for nonnegative tensors}\mbox{}\par
\noindent(1) (Yang and Yang 2010). If $\mathcal{T}$ is a nonnegative tensor of order $k$ and dimension $n$,
then $\rho(\mathcal{T})$ is an $H^{+}$-eigenvalue of $\mathcal{T}$.\\
(2) (Friedland Gaubert and Han 2011). If furthermore $\mathcal{T}$ is weakly irreducible, then
$\rho(\mathcal{T})$ is the unique $H^{++}$-eigenvalue of $\mathcal{T}$, with the unique eigenvector $x\in R_{++}^{n}$, up
to a positive scaling coefficient.\\
(3) (Chang Pearson and Zhang 2008). If moreover $\mathcal{T}$ is irreducible, then $\rho(\mathcal{T})$ is the
unique $H^{+}$-eigenvalue of $\mathcal{T}$, with the unique eigenvector $x\in R_{+}^{n}$, up to a positive
scaling coefficient.
\end{theorem}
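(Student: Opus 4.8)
The plan is to prove the three statements in the order given, since each strengthens the hypotheses of the previous one, and to reduce everything to a fixed-point argument combined with a perturbation (limiting) argument. Throughout write $m$ for the order of the tensor and $S$ for the standard simplex. For part (1) I would first treat a strictly positive tensor. On $S$ consider the map $F$ whose $i$th coordinate is proportional to $(\mathcal{T}x^{m-1})_i^{1/(m-1)}$, normalized so that the image lies again in $S$; since $\mathcal{T}$ is strictly positive, $(\mathcal{T}x^{m-1})_i>0$ for every $x\in S$, so $F$ is continuous and maps $S$ into itself, and Brouwer's fixed-point theorem produces $x\in S$ and $\mu>0$ with $\mathcal{T}x^{m-1}=\mu\,x^{[m-1]}$, i.e.\ a positive eigenpair.

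It then remains to identify $\mu$ with $\rho(\mathcal{T})$, which I would do through the Collatz--Wielandt quantity
\[
\phi(z)=\min_{i:\,z_i>0}\frac{(\mathcal{T}z^{m-1})_i}{z_i^{m-1}},\qquad z\ge 0,\ z\ne 0,
\]
by establishing $\rho(\mathcal{T})=\max_{z}\phi(z)$. The bound $\rho(\mathcal{T})\le\max_z\phi(z)$ follows from any complex eigenpair $(\lambda,y)$: applying the triangle inequality coordinatewise and using $\mathcal{T}\ge 0$ gives $|\lambda|\,|y_i|^{m-1}=\big|(\mathcal{T}y^{m-1})_i\big|\le(\mathcal{T}|y|^{m-1})_i$, so $|\lambda|\le\phi(|y|)\le\max_z\phi(z)$. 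Since the positive eigenpair above attains the value $\mu$, the reverse bound forces $\mu=\max_z\phi(z)=\rho(\mathcal{T})$. For a general nonnegative $\mathcal{T}$ I would apply this to the perturbed tensors $\mathcal{T}_{\varepsilon}=\mathcal{T}+\varepsilon\mathcal{J}$ ($\mathcal{J}$ the all-ones tensor), obtaining positive eigenpairs $(\rho(\mathcal{T}_{\varepsilon}),x_{\varepsilon})$ with $x_{\varepsilon}\in S$; as $\varepsilon\to 0$, compactness of $S$ gives a subsequence $x_{\varepsilon}\to x\in S$, while continuity of the spectral radius gives $\rho(\mathcal{T}_{\varepsilon})\to\rho(\mathcal{T})$, so in the limit $\mathcal{T}x^{m-1}=\rho(\mathcal{T})\,x^{[m-1]}$ with $x\ge 0$. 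This proves (1).

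For (2) and (3) I would add the (weak) irreducibility hypothesis to upgrade ``nonnegative'' to ``positive'' and to obtain uniqueness. Positivity of the Perron vector comes from the combinatorial meaning of weak irreducibility: if some coordinate of a nonnegative eigenvector vanished, strong connectivity of the graph associated with $\mathcal{T}$ would propagate the zero to every coordinate, contradicting $x\ne 0$; this yields $\rho(\mathcal{T})$ as an $H^{++}$-eigenvalue. Uniqueness up to scaling is the main obstacle: given two positive eigenvectors $x,y$ for eigenvalues $\lambda,\mu$, I would take the largest $t$ with $x-t\,y\ge 0$, evaluate the eigenequation at a coordinate where $x_{i_0}=t\,y_{i_0}$, and use the monotonicity and homogeneity of $z\mapsto\mathcal{T}z^{m-1}$ to get $\lambda\ge\mu$; by symmetry $\lambda=\mu$, and irreducibility then rules out a nonzero, boundary-touching difference $x-t\,y$, forcing proportionality. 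Part (3) assumes only irreducibility and works over the closed nonnegative orthant, so the same scheme applies with extra care for eigenvectors on the boundary. I expect this nonlinear Perron--Frobenius uniqueness step to be the hardest part, whereas the existence in (1) is routine once the perturbation-plus-Brouwer setup is in place.
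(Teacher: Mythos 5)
First, a point of reference: the paper offers no proof of this theorem at all --- it is quoted from the literature (Yang--Yang 2010, Friedland--Gaubert--Han 2011, Chang--Pearson--Zhang 2008, via \cite{Fan2015}) and used as a black box --- so your attempt can only be judged on its own merits. Your part (1) follows the standard Yang--Yang route (Brouwer's fixed point theorem for positive tensors, Collatz--Wielandt identification, then perturbation by $\varepsilon\mathcal{J}$ and a compactness limit) and is essentially sound, though two steps are asserted rather than proved: the inequality $\max_z \phi(z)\le \mu$ (which needs the extremal-$t$/monotonicity argument comparing an arbitrary $z\ge 0$ against the positive eigenvector), and the ``continuity of the spectral radius'' under perturbation, which for tensors is not elementary and is usually replaced by monotonicity: $\rho(\mathcal{T})\le \max_z\phi_{\mathcal{T}}(z)\le \max_z\phi_{\mathcal{T}_\varepsilon}(z)=\rho(\mathcal{T}_\varepsilon)$, so the decreasing limit of $\rho(\mathcal{T}_\varepsilon)$ is an eigenvalue of $\mathcal{T}$ that is at least $\rho(\mathcal{T})$, hence equal to it.

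The genuine gap is in your part (2). The claim that under weak irreducibility ``a vanishing coordinate of a nonnegative eigenvector propagates to every coordinate'' is false. At a zero coordinate $i$ the eigenequation only forces, for each entry $\mathcal{T}_{ii_2\cdots i_m}>0$, that \emph{some} index $i_k$ lies in the zero set $Z$; it does not force all of them there, so $Z$ need not absorb the out-neighbourhoods in $G(\mathcal{T})$. Concrete counterexample ($n=3$, $m=3$): let the nonzero entries be $\mathcal{T}_{123}=\mathcal{T}_{132}=a$, $\mathcal{T}_{122}=b$, $\mathcal{T}_{211}=c$, $\mathcal{T}_{313}=\mathcal{T}_{331}=d$, all positive. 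The associated digraph has arcs $1\to 2$, $1\to 3$, $2\to 1$, $3\to 1$, $3\to 3$ and is strongly connected, yet $x=\bigl(1,(c/b)^{1/4},0\bigr)$ satisfies $\mathcal{T}x^{2}=\sqrt{bc}\,x^{[2]}$: a nonnegative eigenvector with a zero coordinate and a positive eigenvalue. (This does not contradict the theorem, since here $\sqrt{bc}<\rho(\mathcal{T})$; but your argument never uses maximality of the eigenvalue, so it cannot be repaired by restricting attention to the Perron pair.) Positivity of the Perron vector under \emph{weak} irreducibility genuinely requires a different mechanism, e.g.\ showing that the perturbed Perron vectors $x_\varepsilon$ stay uniformly inside the simplex by bounding the ratios $x_{\varepsilon,i}/x_{\varepsilon,j}$ along paths of $G(\mathcal{T})$, or invoking nonlinear Perron--Frobenius theory on the interior of the cone; your zero-propagation argument is valid only under the stronger Chang--Pearson--Zhang irreducibility of part (3), where a zero set $Z$ literally witnesses reducibility. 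Finally, in the uniqueness step, note that $x-ty$ is \emph{not} an eigenvector (the map $z\mapsto\mathcal{T}z^{m-1}$ is nonlinear), so the matrix-style ending ``irreducibility rules out a boundary-touching difference'' does not run; after obtaining $\lambda=\mu$ and the extremal $t$, you must analyze the equality case at the touching coordinate $i_0$ --- every entry $\mathcal{T}_{i_0i_2\cdots i_m}>0$ forces $x_{i_k}=ty_{i_k}$ for all its indices --- and propagate these equalities along the arcs of $G(\mathcal{T})$ by strong connectivity to conclude $x=ty$.
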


In 2012, Cooper and Dutle \cite{Cooper2012} defined the adjacency tensor of an $r$-graph. Later in 2017, Banerjee et al.\cite{Banerjee2017} defined the adjacency tensor for general hypergraphs as the following.

\begin{Def}\label{generalhypergraphadjacency}Let $H=(V,E)$ be a general hypergraph with $rank(H)=m$. The adjacency tensor $\mathcal{A}$ of $H$ is defined as follows
\begin{equation*}\mathcal{A}=(a_{i_1i_2\cdots i_m}),1\leq i_1,i_2,\cdots,i_m\leq n.\end{equation*}
For all edges $e=\{{l_1},{l_2}, \cdots, {l_s}\}\in E$ of cardinality $s\leq m$,
\begin{equation*}\label{general adjacency entry}a_{i_1i_2\cdots i_m}=\frac{s}{\alpha(s)}, where \ \alpha(s)=\sum\limits_{k_1,\cdots,k_s\geq 1,\atop k_1+\cdots+k_s=m} \frac{m!}{k_1!k_2!\cdots k_s!}\end{equation*} and $i_1,i_2,\cdots,i_m$ are chosen in all possible ways from $\{l_1,l_2,\cdots,l_s\}$ with at least once for each element of the set. The other positions of the tensor are zeros.
\end{Def}

Notice that if $H$ is an $r$-graph, it is known from direct calculations that $a_{i_1i_2\cdots i_r}= \frac{1}{(r-1)!}$ for an arbitrary edge $e=\{{i_1},{i_2}, \cdots, {i_r}\}\in E$, which is exactly the definition of Cooper and Dutle in \cite{Cooper2012}.

For an edge $e=\{i,i_{2},\cdots,i_{s}\}$ in a general hypergraph with $rank(H)=m$, we denote an $m$ order $n$ dimensional symmetric tensor $\mathcal{A}(e)$ by
$$(\mathcal{A}(e)x)_i=\frac{s}{\alpha(s)} \sum\limits_{k_1\geq 0,k_2,\cdots,k_s\geq 1,\atop k_1+\cdots+k_s=m-1} \frac{(m-1)!}{k_1!k_2!\cdots k_s!} x_i^{k_1} x_{i_2}^{k_2} \cdots x_{i_s}^{k_s},$$
which indicates that
$$x^\top \mathcal{A}(e)x=\frac{s}{\alpha(s)} \sum\limits_{k_1,\cdots,k_s\geq 1,\atop k_1+\cdots+k_s=m} \frac{m!}{k_1!k_2!\cdots k_s!} x_{i}^{k_1} x_{i_2}^{k_2} \cdots x_{i_s}^{k_s}.$$
Then the adjacency tensor $\mathcal{A}$ of order $m$ and dimension $n$ uniquely defines a homogeneous polynomial in $n$ variables of degree $m$ by: $$F_{\mathcal{A}}(x)=\mathcal{A}x^{m}=\sum\limits_{e\in E} x^\top \mathcal{A}(e)x.$$

Based on the definnitions in \cite{Banerjee2017}, Kang et al.\cite{Kang2017} obtained the Perron-Frobenius theorem for general hypergraphs.
For convenience, let $\rho(H)$ denote the spectral radius of the adjacency tensor of a general hypergraph $H$.
\begin{theorem}[Perron-Frobenius theorem for general hypergraphs \cite{Kang2017}]\label{Perron-Frobenius theorem for general hypergraphs}\mbox{}\par
\noindent(1) Let $H$ be a general hypergraph, then $\rho(H)$ is an $H^{+}$-eigenvalue of $H$.\\
(2) If $H$ is connected, then $\rho(H)$ is the unique $H^{++}$-eigenvalue of $H$, with the unique eigenvector $x\in \mathbb{R}_{++}^{n}$, up to a positive scaling cofficient.
\end{theorem}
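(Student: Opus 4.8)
The plan is to deduce this statement directly from the Perron--Frobenius theorem for nonnegative tensors (Theorem~\ref{Perron-Frobenius theorem for nonnegative tensors}), so that the real work lies in translating the combinatorial hypotheses on $H$ into the tensor-theoretic hypotheses on its adjacency tensor $\mathcal{A}$. For part~(1), I would first observe that every entry of $\mathcal{A}$ is nonnegative: by Definition~\ref{generalhypergraphadjacency} each nonzero entry equals $\frac{s}{\alpha(s)}$ with $s\geq 1$ and $\alpha(s)>0$, while all remaining entries are $0$. Thus $\mathcal{A}$ is a nonnegative tensor of order $m$ and dimension $n$, and part~(1) of Theorem~\ref{Perron-Frobenius theorem for nonnegative tensors} immediately yields that $\rho(H)=\rho(\mathcal{A})$ is an $H^{+}$-eigenvalue.

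For part~(2), the key step is to show that $H$ being connected forces $\mathcal{A}$ to be weakly irreducible, after which part~(2) of Theorem~\ref{Perron-Frobenius theorem for nonnegative tensors} gives both the uniqueness of the positive eigenvector and that $\rho(H)$ is the unique $H^{++}$-eigenvalue. Recall that weak irreducibility of $\mathcal{A}$ is equivalent to the strong connectedness of the associated digraph $G(\mathcal{A})$ on vertex set $[n]$, in which $i\to j$ whenever there exist indices $i_2,\dots,i_m$ with $j\in\{i_2,\dots,i_m\}$ and $a_{ii_2\cdots i_m}>0$. I would prove that whenever two distinct vertices $i,j$ lie in a common edge $e=\{l_1,\dots,l_s\}$ of $H$, there is an arc $i\to j$ in $G(\mathcal{A})$: place $i$ in the leading index slot and distribute the remaining $s-1$ vertices of $e$ (including $j$) among the other $m-1$ slots, which is possible precisely because $s\leq m=rank(H)$ guarantees $m-1\geq s-1$ free positions. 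The resulting index tuple is a legal positive position of $\mathcal{A}$ by Definition~\ref{generalhypergraphadjacency}. Since $\mathcal{A}$ is symmetric, $G(\mathcal{A})$ is a symmetric digraph whose underlying undirected graph therefore contains the underlying graph of $H$ (vertices joined iff they share an edge); hence connectedness of $H$ propagates to $G(\mathcal{A})$, establishing weak irreducibility.

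The main obstacle is the combinatorial verification in part~(2): one must check that for a non-uniform edge of cardinality $s<m$ the specific multiset pattern demanded by Definition~\ref{generalhypergraphadjacency}, namely that every vertex of $e$ appears at least once among the $m$ indices, can always be realized with $i$ fixed in the leading slot and $j$ present among the trailing slots. This is exactly where the inequality $s\leq m$ is used, and it is the step that is genuinely new relative to the uniform case, where $s=m$ makes the pattern essentially forced. Once this realizability is confirmed, the rest is a routine appeal to the already-established tensor Perron--Frobenius machinery.
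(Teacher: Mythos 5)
The paper itself contains no proof of this statement: Theorem~\ref{Perron-Frobenius theorem for general hypergraphs} is quoted verbatim from \cite{Kang2017}, so there is no internal argument to compare yours against. That said, your proposal is correct, and it is essentially the standard derivation (the one carried out in the cited source): part~(1) is immediate because every entry of $\mathcal{A}$ is either $0$ or $s/\alpha(s)>0$, so Theorem~\ref{Perron-Frobenius theorem for nonnegative tensors}(1) applies; and part~(2) correctly reduces to showing that connectedness of $H$ forces weak irreducibility of $\mathcal{A}$, after which Theorem~\ref{Perron-Frobenius theorem for nonnegative tensors}(2) (Friedland--Gaubert--Han) gives both uniqueness claims. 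Your combinatorial realizability step is the genuine content and it is sound: for distinct $i,j$ in a common edge $e$ with $|e|=s$, place $i$ in the leading slot, put the remaining $s-1$ vertices of $e$ (including $j$) into $s-1$ of the other $m-1$ slots (possible since $m-1\geq s-1$), and fill any leftover slots with repeated elements of $e$; the resulting tuple uses only vertices of $e$ and covers each of them at least once, so by Definition~\ref{generalhypergraphadjacency} the entry equals $s/\alpha(s)>0$, yielding the arc $i\to j$ in the associated digraph. Connectedness of $H$ then gives strong connectedness of that digraph, i.e.\ weak irreducibility. The only cosmetic caveat is that the notions you invoke but the paper never defines (hypergraph connectedness, weak irreducibility via the digraph $G(\mathcal{A})$) should be stated explicitly if this proof were to be inserted, since the paper leans on \cite{Kang2017} precisely to avoid introducing them.
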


By Theorem~\ref{Perron-Frobenius theorem for general hypergraphs}, if $H$ is connected, there is a unique unit positive eigenvector $x$ corresponding to $\rho{(H)}$. Let $\| x \|_{m}=(\sum \limits_{i=1}^{n} x_{i}^{m})^{\frac{1}{m}}$. The positive eigenvector $x$ with $\| x \|_{m}=1$ corresponding to $\rho(H)$ is called the principal eigenvector of $H$. Assume $x$ is the principal eigenvector of an $R$-graph $H$,  by the theory of optimization, we have
\begin{equation}\label{spectral radius equation} \rho{(H)}=F_{\mathcal{A}}(x)=\mathcal{A}x^{m}=\sum\limits_{e\in E} x^\top \mathcal{A}(e)x.\end{equation}

We give some auxiliary lemmas which will be used in the sequel.
\begin{lemma}[Maclaurin's inequality \cite{Hardy.Littlewood1988_4}]\label{Maclaurin¡¯s inequality}
Let $x_{1}, x_{2}, \cdots,
x_{n}$ be positive real numbers. For any $k\in [n]$, define $S_{k}$ as follows:
\begin{equation*}\label{ Maclaurin's inequality}
S_{k}=\frac{\sum\limits_{1\leq i_{1}<i_{2}<\cdots<i_{k}\leq n}x_{i_{1}}x_{i_{2}} \cdots x_{i_{k}}}{ \left(
  \begin{array}{ccc}
    n\\
    k
  \end{array}
\right)}.\end{equation*}
Then $S_{1}\geq \sqrt{S_{2}} \geq \sqrt[3]{S_{3}}\cdots \geq \sqrt[n]{S_{n}}$ with equality if and only if all the $x_{i}$ are equal.\end{lemma}

For any $k\in [n]$, by Maclaurin's inequality, we get
$S_{1}\geq \sqrt[k]{S_{k}}$, that is,  $$\sum_{1\leq i_{1}<i_{2}<\cdots<i_{k}\leq n}x_{i_{1}}x_{i_{2}} \cdots x_{i_{k}}\leq { \left(
  \begin{array}{ccc}
    n\\
    k
  \end{array}
\right)}(\frac{x_{1}+x_{2}+\cdots+x_{n}}{n})^{k}.$$

The following lemma is the generalization of the Cauchy-Schwarz inequality for more than two vectors.
\iftrue
\begin{lemma}[\cite{Hardy.Littlewood1988_4}]\label{Cauchy¨CSchwarz inequality}
Let $x_{1}=(x_{i}^{(1)}), x_{2}=(x_{i}^{(2)}), \cdots,
x_{k}=(x_{i}^{(k)})$ be nonnegative vectors of dimension $n$.  Then
\begin{equation}\label{inequality}
\sum_{i=1}^{n}\prod_{j=1}^{k}x_{i}^{(j)} \leq \| x_{1} \|_{k}\| x_{2} \|_{k}\cdots| x_{k} \|_{k}.
\end{equation}
Equality holds if and only if all vectors are collinear to one of them.
\end{lemma}
\fi

\section{Main results}

\noindent

In this section, we discuss the relations between the clique number and the spectral radius of an $R$-graph. A tight lower bound of the spectral radius of an $R$-graph is presented. Further, we determine the upper bound of the spectral radius based on
 the clique number for an $\{m,m-1\}$-graph, which derived from a Motzkin-Straus type result due to $L(H)$ for $\{m,m-1\}$-graphs. In the following discussions, without loss of generality, suppose $m\geq3$.

\begin{theorem}\label{lower bound for R-hypergraph}
 Let $H$ be an $R$-graph with clique number $\omega$. Then
\begin{equation}\rho(H)\geq \sum\limits_{s\in R}\left(
  \begin{array}{ccc}
    \omega-1\\
    s-1
  \end{array}
\right) \end{equation}
with equality if and only if $H$ is a complete $R$-graph.

\end{theorem}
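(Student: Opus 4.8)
The plan is to bound $\rho(H)$ from below by evaluating the homogeneous polynomial $F_{\mathcal{A}}$ at a nonnegative test vector supported on a maximum clique, and then to extract the equality case from the Perron--Frobenius theorem. Let $K$ be a maximum clique of $H$, so $K$ is a complete $R$-subgraph on $\omega$ vertices. Define $x^{0}$ by $x^{0}_{i}=\omega^{-1/m}$ for $i\in K$ and $x^{0}_{i}=0$ otherwise; then $x^{0}\geq 0$ and $\|x^{0}\|_{m}=1$. Since the adjacency tensor is symmetric and nonnegative, the variational characterization underlying Theorem~\ref{Perron-Frobenius theorem for general hypergraphs} (cf.\ Equation~\eqref{spectral radius equation}) gives $\rho(H)=\max\{\mathcal{A}x^{m}:x\geq 0,\ \|x\|_{m}=1\}$, so it suffices to compute $\mathcal{A}(x^{0})^{m}=\sum_{e\in E}(x^{0})^{\top}\mathcal{A}(e)x^{0}$.

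The key computation is that any edge $e$ having a vertex outside $K$ contributes $0$, because every monomial in $(x^{0})^{\top}\mathcal{A}(e)x^{0}$ contains that zero-weight vertex to a positive power; thus only edges lying inside $K$ survive. For an edge $e$ of size $s$ inside $K$ all weights equal $\omega^{-1/m}$, so $(x^{0})^{\top}\mathcal{A}(e)x^{0}=\frac{s}{\alpha(s)}\,\omega^{-1}\sum_{k_{1}+\cdots+k_{s}=m,\ k_{j}\geq 1}\frac{m!}{k_{1}!\cdots k_{s}!}=\frac{s}{\alpha(s)}\cdot\frac{\alpha(s)}{\omega}=\frac{s}{\omega}$, by the definition of $\alpha(s)$. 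Since $K$ is complete it has exactly $\binom{\omega}{s}$ edges of each size $s\in R$, whence $\mathcal{A}(x^{0})^{m}=\sum_{s\in R}\binom{\omega}{s}\frac{s}{\omega}=\sum_{s\in R}\binom{\omega-1}{s-1}$, using $\binom{\omega}{s}\frac{s}{\omega}=\binom{\omega-1}{s-1}$. This yields the inequality $\rho(H)\geq\sum_{s\in R}\binom{\omega-1}{s-1}$.

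For the equality case I would argue in both directions. If $H$ is the complete $R$-graph on $n$ vertices, then $\omega=n$ and, by vertex-symmetry, the uniform vector $x^{0}$ is an eigenvector; a direct check of the eigenequation (equivalently the multinomial identity $s\,\beta(s)=\alpha(s)$, where $\beta(s)$ is the degree $m-1$ sum appearing in $(\mathcal{A}(e)x)_i$, which itself follows from comparing $(x^{0})^{\top}\mathcal{A}(e)x^{0}=\sum_{i\in e}x^{0}_{i}(\mathcal{A}(e)x^{0})_{i}$ with the direct value $s/\omega$) shows its eigenvalue equals $\sum_{s\in R}\binom{n-1}{s-1}$, so equality holds. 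Conversely, assume equality. Then $x^{0}$ maximizes $\mathcal{A}x^{m}$ on $\{x\geq 0,\ \|x\|_{m}=1\}$; writing the KKT conditions and using $\mathcal{A}\geq 0$ together with $x^{0}\geq 0$ forces $(\mathcal{A}(x^{0})^{m-1})_{i}=\rho(H)(x^{0}_{i})^{m-1}$ at every coordinate, so $x^{0}$ is a genuine $H^{+}$-eigenvector for $\rho(H)$. Assuming $H$ connected, part (2) of Theorem~\ref{Perron-Frobenius theorem for general hypergraphs} makes the principal eigenvector unique and strictly positive, so $x^{0}>0$, forcing $K=V$, $\omega=n$, and $H$ complete.

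The main obstacle I expect is this equality analysis, in two respects. First, one must justify that a maximizer on the boundary of the nonnegative orthant is a true eigenvector rather than merely a KKT point; this is exactly where the nonnegativity of $\mathcal{A}$ is needed to annihilate the complementary-slackness terms at the zero coordinates. Second, the connectivity hypothesis is essential for the ``only if'' direction: for disconnected $H$ equality can hold without $H$ being complete (for instance a complete $R$-graph together with isolated vertices, or two disjoint maximum cliques), so I would state a connectedness assumption explicitly and note that it is what allows the Perron--Frobenius uniqueness to force $K=V$.
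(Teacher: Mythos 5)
Your proposal follows the paper's argument in all essentials: the same test vector with entries $\omega^{-1/m}$ on a maximum clique and zero elsewhere, the same computation showing each size-$s$ edge inside the clique contributes $s/\omega$, and the same identity $\binom{\omega}{s}\frac{s}{\omega}=\binom{\omega-1}{s-1}$ to reach the bound. Where you genuinely add something is in the equality case. The paper merely asserts that if equality holds then the test vector ``is a maximizer of $F_{\mathcal{A}}(x)$ over $\mathbb{R}_{+}^{n}$'' and hence ``by Theorem~\ref{Perron-Frobenius theorem for general hypergraphs}, $x$ must be a positive vector''; your KKT argument, using nonnegativity of $\mathcal{A}$ to show that a boundary maximizer is a genuine $H^{+}$-eigenvector for $\rho(H)$, supplies the justification that the paper skips. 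More importantly, you are right that connectivity is indispensable here: part (2) of Theorem~\ref{Perron-Frobenius theorem for general hypergraphs} applies only to connected $H$, and for disconnected $H$ the stated equality characterization is actually false --- a complete $R$-graph together with an isolated vertex, or two disjoint copies of the same complete $R$-graph, attains equality without being complete. The paper's statement and proof silently pass over this, so your explicit connectedness hypothesis is not an over-cautious addition but a correction that the theorem, as literally stated, needs.
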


\begin{proof}
Assume $H_{0}$ be the maximum complete $R$-subgraph of $H$ on $\omega$ vertices. Suppose $rank(H)=m$. Let $x$ be a nonnegative vector of dimension $n$ with entries \[x_{i}=\begin{cases}
\frac{1}{\sqrt[m]{\omega}}&\text{if $i\in H_{0}$},\\
0&\text{otherwise}.
\end{cases}\] It is easy to verify that $\| x \|^{m} _{m}=1$.

For an arbitrary edge $e$ in $H_{0}$, suppose $|e|=s$, we have
\begin{eqnarray*}\label{clique eq1} x^\top \mathcal{A}(e)x&=&\frac{s}{\alpha(s)} \sum\limits_{k_1,\cdots,k_s\geq 1,\atop k_1+\cdots+k_s=m} \frac{m!}{k_1!k_2!\cdots k_s!} x_{i}^{k_1} x_{i_2}^{k_2} \cdots x_{i_s}^{k_s}\\&=&\frac{1}{\omega}\frac{s}{\alpha(s)}\sum\limits_{k_1,\cdots,k_s\geq 1,\atop k_1+\cdots+k_s=m} \frac{m!}{k_1!k_2!\cdots k_s!}\\&=&\frac{s}{\omega}.\end{eqnarray*}

After using the Equation (\ref{spectral radius equation}), we have
\begin{eqnarray*}
\rho(H)&\geq&F_{\mathcal{A}}(x)=\sum\limits_{e\in E(H_{0})} x^\top \mathcal{A}(e)x
=\sum\limits_{s\in R}\sum\limits_{e\in E(H_{0}),\atop |e|=s}\frac{s}{\omega}
= \sum\limits_{s\in R}\frac{s}{\omega}\left(
  \begin{array}{ccc}
    \omega\\
    s
  \end{array}
\right)
=\sum\limits_{s\in R}\left(
  \begin{array}{ccc}
    \omega-1\\
    s-1
  \end{array}
\right).
\end{eqnarray*}

If $\rho(H)= \sum\limits_{s\in R}\left(
  \begin{array}{ccc}
    \omega-1\\
    s-1
  \end{array}
\right)$, that is to say the unit vector $x$ is a maximizer of $F_{\mathcal{A}}(x)$ over $\mathbb{R}_{+}^{n}$. Then by Theorem \ref{Perron-Frobenius theorem for general hypergraphs},
$x$ must be a positive vector. Thus all vertices in $H$ belong
to $H_{0}$, which indicates that $\omega=n$, i.e., $H$ is a complete $R$-graph. On the other
hand, if $H$ is a complete $R$-graph, then $\omega=n$ and $x$ is the principal eigenvector. By Equation (\ref{spectral radius equation}), we have
$\rho(H)= \sum\limits_{s\in R}\left(
  \begin{array}{ccc}
    \omega-1\\
    s-1
  \end{array}
\right)$. Therefore the theorem follows.\end{proof}

Observe that if $H$ is an $m$-graph, Theorem \ref{lower bound for R-hypergraph} indicates that $\rho(H)\geq
\left(
  \begin{array}{ccc}
    \omega-1\\
    m-1
  \end{array}
\right) $
as proved by Yi and Chang in \cite{chang2013}.

Spectral methods for 2-graphs reside on a solid ground, with traditions
settled both in tools and problems, such as number of edges, independence number.
Theorem \ref{lower bound for R-hypergraph} is also a useful tool for general $R$-graphs.  Now, we give the lower bound of clique number for $R$-graphs with $R=\{ m, m-1 \}$.
\begin{theorem}
 Let $H$ be an $\{m,m-1\}$-graph with clique number $\omega$. Then
$$\omega \leq m-2+[(m-1)!\rho(H)]^{\frac{1}{m-1}}. $$
\end{theorem}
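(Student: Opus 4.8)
The plan is to reduce everything to Theorem~\ref{lower bound for R-hypergraph} and then control the resulting binomial coefficient by a one-line elementary estimate; no new machinery is required. Applying Theorem~\ref{lower bound for R-hypergraph} with $R=\{m,m-1\}$, the exponents $s-1$ run over $\{m-1,m-2\}$, so
\[
\rho(H)\geq \binom{\omega-1}{m-1}+\binom{\omega-1}{m-2}=\binom{\omega}{m-1},
\]
where the last equality is Pascal's rule. Hence it suffices to convert the inequality $(m-1)!\,\rho(H)\geq (m-1)!\binom{\omega}{m-1}$ into an upper bound on $\omega$.

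Next I would expand the binomial as a product of $m-1$ consecutive integers,
\[
(m-1)!\binom{\omega}{m-1}=\omega(\omega-1)\cdots(\omega-m+2),
\]
whose smallest factor is $\omega-m+2$. When $\omega-m+2\geq 0$, all $m-1$ factors are nonnegative and each is at least $\omega-m+2$, so the product is at least $(\omega-m+2)^{m-1}$. Combining this with the previous display gives $(\omega-m+2)^{m-1}\leq (m-1)!\,\rho(H)$; taking nonnegative $(m-1)$-th roots (the map $t\mapsto t^{1/(m-1)}$ being increasing on $[0,\infty)$) and rearranging yields exactly $\omega\leq m-2+[(m-1)!\,\rho(H)]^{\frac{1}{m-1}}$.

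The only point that needs a word of care is the sign of $\omega-m+2$, since both the factorwise bound and the extraction of a real $(m-1)$-th root require it to be nonnegative. If instead $\omega\leq m-2$, then because $\rho(H)\geq 0$ (it is a spectral radius, and is an $H^{+}$-eigenvalue by Theorem~\ref{Perron-Frobenius theorem for general hypergraphs}) the right-hand side is at least $m-2\geq\omega$, so the asserted inequality holds trivially. These two cases exhaust all possibilities, which completes the argument. I do not anticipate a genuine obstacle here: the whole proof is a combinatorial simplification of Theorem~\ref{lower bound for R-hypergraph}, and in particular the Maclaurin and Cauchy--Schwarz lemmas stated above are not needed for this estimate.
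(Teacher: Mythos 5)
Your proof is correct and takes essentially the same route as the paper's: apply Theorem~\ref{lower bound for R-hypergraph} with $R=\{m,m-1\}$, combine the two binomial coefficients via Pascal's rule into $\binom{\omega}{m-1}$, bound that below by $\frac{(\omega-m+2)^{m-1}}{(m-1)!}$, and take $(m-1)$-th roots. Your explicit case analysis for $\omega\leq m-2$ is a small improvement: the paper asserts the factor-wise bound without comment, even though it can fail there (the product vanishes while $(\omega-m+2)^{m-1}$ may be positive when $m-1$ is even), so your trivial-case argument is the cleaner way to close that edge case.
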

\begin{proof} Since
$$\left(
  \begin{array}{ccc}
    \omega\\
    m-1
  \end{array}
\right)=\frac{\omega(\omega-1)\cdots(\omega-m+2)}{(m-1)!}\geq \frac{(\omega-m+2)^{m-1}}{(m-1)!}
$$
By Theorem \ref{lower bound for R-hypergraph}, if $R=\{m,m-1\}$, we have
\begin{eqnarray*}
\rho(H)&\geq& \left(
  \begin{array}{ccc}
    \omega-1\\
    m-1
  \end{array}
\right)+ \left(
  \begin{array}{ccc}
    \omega-1\\
    m-2
  \end{array}
\right)= \left(
  \begin{array}{ccc}
    \omega\\
    m-1
  \end{array}
\right)\geq \frac{(\omega-m+2)^{m-1}}{(m-1)!}.
\end{eqnarray*}

Thus it is easy to verify that $\omega \leq m-2+[(m-1)!\rho(H)]^{\frac{1}{m-1}}.$ This completes the proof.
\end{proof}

\begin{theorem}\label{general M-S}
Let $H$ be an $\{m,m-1\}$-graph with clique number $\omega$. If either $H$ is a complete $\{m,m-1\}$-graph or there exists two nonadjacent vertices $i$ and $j$ such that $R(i)=R(j)$, then
\begin{eqnarray}L(H)&=&(\frac{1}{\omega})^{m}\left(
  \begin{array}{ccc}
    \omega+1\\
    m
  \end{array}
\right).\end{eqnarray}
\end{theorem}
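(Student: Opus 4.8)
The plan is to establish the two inequalities $L(H)\ge(1/\omega)^m\binom{\omega+1}{m}$ and $L(H)\le(1/\omega)^m\binom{\omega+1}{m}$ separately. For the lower bound I would mimic the proof of Theorem~\ref{lower bound for R-hypergraph}: let $H_0$ be a maximum clique on $\omega$ vertices and set $x_v=1/\omega$ for $v\in H_0$ and $x_v=0$ otherwise, so $x\in S$. Any edge meeting a zero coordinate contributes nothing to $L(H,x)$, while each edge inside $H_0$ contributes $\tfrac{1}{\alpha(s)}x^e_m=(1/\omega)^m$; since $H_0$ is complete it has $\binom{\omega}{m}+\binom{\omega}{m-1}=\binom{\omega+1}{m}$ edges, giving $L(H)\ge(1/\omega)^m\binom{\omega+1}{m}$. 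This half uses neither alternative in the hypothesis.

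For the reverse inequality I would run a Motzkin--Straus type reduction. Pick an optimal weighting $x^{*}$, which exists by compactness of $S$. The decisive feature of a $\{m,m-1\}$-graph is that in $L(H,x)$ every variable occurs to power at most $2$, a square arising only from an $(m-1)$-edge. Consequently, if $i,j$ are nonadjacent vertices in the support of $x^{*}$, then raising $x_i$ by $t$ and lowering $x_j$ by $t$ produces a function $\phi(t)$ that is quadratic in $t$ with no cross term (no edge contains both $i$ and $j$) and leading coefficient $\alpha_i+\alpha_j$, where $\alpha_i=\tfrac{1}{m-1}\sum_{e\ni i,\,|e|=m-1}\prod_{u\in e\setminus\{i\}}x^{*}_u\ge0$. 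Optimality forces $\phi'(0)=0$ and $\phi''(0)=2(\alpha_i+\alpha_j)\le0$; as the latter is also $\ge0$, $\phi$ is constant, so I may slide $t$ to a boundary and zero out one coordinate without changing $L$. Iterating, the support shrinks to a clique $C$ with $|C|=k\le\omega$, so that $L(H)=L(H,x^{*})=L(H[C],x^{*}|_{C})$, where $H[C]$ is a complete $\{m,m-1\}$-graph on $k$ vertices.

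It therefore suffices to maximize $L$ on $H[C]$. I would first simplify the polynomial: writing $e_j$ for the elementary symmetric polynomials of the $k$ active variables and using $e_1e_{m-1}=m\,e_m+\sum_v x_v^{2}e_{m-2}(x_{-v})$ together with $e_1=1$ on $S$, the clique polynomial collapses to $L(H[C],x)=\tfrac{1}{m-1}\bigl(e_{m-1}(x)-e_m(x)\bigr)$. To show this is maximized at the uniform weighting, fix the remaining coordinates $\hat x$ and write $x_i=\tfrac{c+d}{2},\,x_j=\tfrac{c-d}{2}$; then $e_{m-1}-e_m=\mathrm{const}-\tfrac{d^2}{4}\bigl(e_{m-3}(\hat x)-e_{m-2}(\hat x)\bigr)$, and log-concavity of elementary symmetric functions (Newton's inequalities, the companion of the Maclaurin inequality stated above) gives $e_{m-2}(\hat x)\le e_1(\hat x)\,e_{m-3}(\hat x)\le e_{m-3}(\hat x)$. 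Hence equalizing any two coordinates never decreases $L$, so the maximum is attained at $x_v=1/k$ and equals $f(k):=(1/k)^m\binom{k+1}{m}$. A direct computation shows $f$ is nondecreasing in $k$, so $L(H)\le f(k)\le f(\omega)$; together with the lower bound this pins $k=\omega$ and yields the claimed value. In this scheme the hypothesis only supplies a starting point: the ``complete'' alternative is exactly the clique computation with $k=n=\omega$, and a nonadjacent pair (the condition $R(i)=R(j)$ being one convenient guarantee of its existence) feeds the first shifting step.

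I expect the clique computation to be the main obstacle, namely proving that the uniform weighting maximizes $\tfrac{1}{m-1}(e_{m-1}-e_m)$. Because this is a \emph{difference} of symmetric functions, the plain Maclaurin bounds pull in opposite directions ($e_{m-1}$ wants to be large while one wants $e_m$ small), so the global maximum is not immediate from a single inequality; the smoothing step, resting on the log-concavity estimate $e_{m-2}(\hat x)\le e_{m-3}(\hat x)$, is really the crux. By comparison, the reduction to a clique and the monotonicity of $f(k)$ are routine.
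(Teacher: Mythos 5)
Your lower bound and your treatment of the complete case are both correct: the identity $L(H[C],x)=\tfrac{1}{m-1}\bigl(e_{m-1}(x)-e_m(x)\bigr)$ on the simplex and the smoothing step via $e_{m-2}(\hat x)\le e_1(\hat x)\,e_{m-3}(\hat x)\le e_{m-3}(\hat x)$ are sound, and they give a cleaner argument for the complete case than the paper's term-by-term Maclaurin estimates; the convexity/sliding lemma for a nonadjacent pair inside the support is also fine (and, as you note, needs no condition on $R(i),R(j)$). The genuine gap is the sentence ``Iterating, the support shrinks to a clique $C$ with $|C|=k\le\omega$.'' Your iteration can only continue while the support contains a \emph{nonadjacent} pair, so it halts as soon as the support is pairwise adjacent in $H$. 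In a hypergraph, pairwise adjacency of a vertex set is far weaker than spanning a complete $\{m,m-1\}$-subgraph (the witnessing edges need not even lie inside the support, in which case they contribute nothing to $L$), so neither ``$H[C]$ is complete'' nor ``$k\le\omega$'' follows. This is exactly the point where the Motzkin--Straus phenomenon breaks down for hypergraphs, and it is the whole reason the theorem carries its hypothesis; your reading of that hypothesis as merely ``feeding the first shifting step'' cannot work, since nothing guarantees a nonadjacent pair inside the support after (or even before) the first slide.

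The failure is concrete. Take $m=3$ and let $H$ have vertex set $[14]$ and edges consisting of all $\binom{12}{3}=220$ triples inside $[12]$ together with the three $2$-edges $\{1,2\},\{1,13\},\{1,14\}$. The stated hypothesis holds ($13$ and $14$ are nonadjacent with $R(13)=R(14)=\{2\}$), and $\omega=2$ because the $2$-edges form a star, so no three vertices are pairwise joined by $2$-edges. Every pair inside $[12]$ is adjacent, so your process halts on a support inside $[12]$ that is not a clique; and the uniform weighting on $[12]$ already gives $L(H,x)=\tfrac{221}{1728}>\tfrac{216}{1728}=\bigl(\tfrac{1}{\omega}\bigr)^{3}\binom{\omega+1}{3}$, so the inequality $L(H)\le(\tfrac{1}{\omega})^m\binom{\omega+1}{m}$ that your reduction is supposed to deliver is false for this $H$ (as literally stated, the example even defeats the theorem's conclusion, which shows the hypothesis must be exploited in an essentially hereditary way, not just once). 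This is also where your approach departs from the paper: the paper never claims the support becomes a clique; it runs an induction on $n$, deleting zero-weight vertices, and in its Case 1 uses $R(i)=R(j)$ to shift \emph{all} of $x_j$ onto $x_i$ and strictly increase $L$, reserving the Maclaurin computation for the genuinely complete case. So the step you judged routine --- the reduction to a clique --- is the real crux, while the clique maximization you flagged as the main obstacle is the easy half.
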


\begin{proof} Assume $H_{0}$ be the maximum complete $\{m,m-1\}$-subgraph of $H$ on $\omega$ vertices. Let $x$ be a nonnegative vector of dimension $n$ with entries \[x_{i}=\begin{cases}
\frac{1}{\omega}&\text{if $i\in H_{0}$},\\
0&\text{otherwise}.
\end{cases}\]  It is easy to verify that $\| x \|_{1}=\sum \limits_{i=1}^{n} x_{i}=1$, i.e., $x\in S$.

After using the Definition \ref{lagrange x}, we have
\begin{eqnarray*}
L(H)&\geq& L(H,x)\\&=& \sum\limits_{e\in{E(H_{0})}}\frac{1}{\alpha(s)}x_{m} ^{e}\\
&=& \sum\limits_{s\in \{m,m-1\}}\frac{1}{\alpha(s)}\sum\limits_{ e\in E(H_{0}),\atop |e|=s}x^{e}_{m}\\
&=&\sum\limits_{s\in \{m,m-1\}}\frac{1}{\alpha(s)}\sum\limits_{e\in E(H_{0}),\atop |e|=s}\alpha(s)({\frac{1}{\omega}})^{m}\\
&=&({\frac{1}{\omega}})^{m}\sum\limits_{s\in \{m,m-1\}}\left(
  \begin{array}{ccc}
    \omega\\
    s
  \end{array}
\right)\\
&=&(\frac{1}{\omega})^{m}\left(
  \begin{array}{ccc}
    \omega+1\\
    m
  \end{array}
\right).\end{eqnarray*}

To prove the opposite inequality, we proceed by induction on $n$. For $n\leq m-2$,
we have $\omega =1$ and $L(H)=0$. Without loss of generality, suppose $n$ is sufficiently large. Assume the theorem true for $\{m,m-1\}$-graphs with
fewer than $n$ vertices.
Suppose $x$ is an optimal weighting for $H$, where $x_{1}\geq x_{2}\geq \cdots \geq x_{k} > x_{k+1}=x_{k+2}=\cdots=x_{n}=0$.

If $k<n$, there exists one of the entries $x_{i}=0$.
Let $H'$ be obtained from
$H$ by deleting the corresponding vertex $i$ and the edges containing $i$. Since the theorem holds for $H'$, we have
\begin{eqnarray*}
L(H)&=& L(H')= ({\frac{1}{\omega'}})^{m}\left(
  \begin{array}{ccc}
    \omega'+1\\
    m
  \end{array}
\right),\end{eqnarray*}
where $\omega'$ is the clique number of $H'$. It is clearly that $\omega' \leq \omega$.

For $\omega\geq 1$,
\begin{eqnarray*}
({\frac{1}{\omega}})^{m}\left(
  \begin{array}{ccc}
    \omega+1\\
    m
  \end{array}
\right)&=&\frac{(1+\frac{1}{\omega})(1-\frac{1}{\omega})(1-\frac{2}{\omega})\cdots(1-\frac{m-2}{\omega})}{m!}
=\frac{(1-\frac{1}{\omega^{2}})(1-\frac{2}{\omega})\cdots(1-\frac{m-2}{\omega})}{m!},
\end{eqnarray*}
which is an monotonically increasing function of $\omega$ . Thus \begin{eqnarray*}
L(H)&=& L(H')\leq ({\frac{1}{\omega}})^{m}\left(
  \begin{array}{ccc}
    \omega+1\\
    m
  \end{array}
\right).\end{eqnarray*}

If $k=n$, that is $x_{1}\geq x_{2}\geq \cdots \geq x_{n}>0$. To proceed our proof, we consider the following two cases.

{\bf{Case 1.}}
There exists two nonadjacent vertices $i$ and $j$ such that $R(i)=R(j)$. Notice that $R(i)$and $R(j)$ are multisets.
For a vector $x$ in $R_{+}^{n}$,
we write $$L^{i}(H,x)=\sum\limits_{e\in E(i)}\frac{1}{\alpha(s)}x_{m}^{e},$$
and
$$L^{j}(H,x)=\sum\limits_{e\in E(j)}\frac{1}{\alpha(s)}x_{m}^{e}.$$

Define two
vectors $y=(y_{k})$ and $z=(z_{k})$ in $R_{+}^{n}$ as follows. Let $y_{k} = x_{k}$ for $k\neq i,j$, $y_{i}=x_{i}+x_{j}$ and $y_{j}=0$. Clearly, $y\in S$.
Further, $z_{i} = x_{j}$ and $z_{k} = x_{k}$ for $k\neq i$.


Without loss of generality, assume $L^{i}(H,z)\geq L^{j}(H,x)$.
By definition \ref{lagrange x} and $R(i)=R(j)$,  we have
\begin{eqnarray*}
L(H,y)-L(H,x)&=& L^{i}(H,y)-L^{i}(H,x)-L^{j}(H,x)\\
&=& \sum\limits_{e\in E(i)}\frac{1}{\alpha(s)}y_{m}^{e}-\sum\limits_{e\in E(i)}\frac{1}{\alpha(s)}x_{m}^{e}
-\sum\limits_{e\in E(j)}\frac{1}{\alpha(s)}x_{m}^{e}.\\
\end{eqnarray*}

For an arbitrary positive interger $k$, it is clearly that $(x_{i}+x_{j})^{k}> x_{i}^{k}+x_{j}^{k}$.  Consequently, $L^{i}(H,y)>L^{i}(H,x)+L^{i}(H,z).$
Thus
$$
L(H,y)-L(H,x)= L^{i}(H,y)-L^{i}(H,x)-L^{j}(H,x)> L^{i}(H,z)-L^{j}(H,x)\geq {0},
$$

So that the maximum is attained for the subgraph $H'$ obtained from $H$
by deleting the vertex $j$ and the corresponding edges containing $j$. Similarly,
the theorem is again true
by the induction hypothesis.

{\bf{Case 2.}}  If $H$ is a complete $\{m,m-1\}$-graph, it is obvious that $\omega=n$ and the maximizer of $L(H, x)$ over $S$ must be a positive vector. Otherwise, we can also use the induction hypothesis. Then we have 
\begin{eqnarray*}
L(H)&=&\max\limits_{x\in S}L(H,x)=\max\limits_{x\in S}\sum\limits_{e\in E}\frac{1}{\alpha(s)}x_{m}^{e}=\max\limits_{x\in S}\sum\limits_{s\in{\{m-1,m\}}}\frac{1}{\alpha(s)}\sum\limits_{e\in{E}\atop {|e|=s}}x_{m} ^{e} \\
&\leq& \max\limits_{x\in S}\frac{1}{\alpha(m)}\sum\limits_{e\in{E}\atop {|e|=m}}x_{m} ^{e}+\max\limits_{x\in S}\frac{1}{\alpha(m-1)}\sum\limits_{e\in{E}\atop {|e|=m-1}}x_{m} ^{e}.\\
\end{eqnarray*}

It is easy to verify that $\alpha(m)=m!$ and $$\sum\limits_{e\in{E}\atop {|e|=m}}x_{m} ^{e}=\sum\limits_{\{i_{1},i_{2},\cdots,i_{m}\}\in E,\atop {1\leq i_{1}<i_{2}<\cdots<i_{k}\leq n}}m!x_{i_{1}}x_{i_{2}}\cdots x_{i_{m}},$$ where the sum is over all edges with cardinality $m$.

By Maclaurin's inequality, we have
\begin{eqnarray*}
\frac{1}{\alpha(m)}\sum\limits_{e\in{E}\atop {|e|=m}}x_{m} ^{e}&=&  \sum\limits_{\{i_{1},i_{2},\cdots,i_{m}\}\in E,\atop {1\leq i_{1}<i_{2}<\cdots<i_{k}\leq n}}x_{i_{1}}x_{i_{2}}\cdots x_{i_{m}}\\&\leq& \left(
  \begin{array}{ccc}
    n\\
    m
  \end{array}
\right)(\frac{\sum\limits_{i=1}^{n}x_{i}}{n})^{m}\\
&=&(\frac{1}{n})^{m}\left(
  \begin{array}{ccc}
    n\\
    m
  \end{array}
\right).
\end{eqnarray*}
The equality holds if and only if $x_{1}=x_{2}=\cdots=x_{n}=\frac{1}{n}.$

Similarly, $\alpha(m-1)=\sum\limits_{k_1,\cdots,k_{m-1}\geq 1,\atop k_1+\cdots+k_{m-1}=m} \frac{m!}{k_1!k_2!\cdots k_{m-1}!}=\frac{(m-1)m!}{2}$,
and \begin{eqnarray*}\sum\limits_{e\in{E}\atop {|e|=m-1}}x_{m} ^{e}&=&\sum\limits_{\{i_{1},i_{2},\cdots,i_{m-1}\}\in E}\sum\limits_{k_1,\cdots,k_{m-1}\geq 1,\atop k_1+\cdots+k_{m-1}=m} \frac{m!}{k_1!k_2!\cdots k_{m-1}!}x_{i_{1}}^{k_{1}}x_{i_{2}}^{k_{2}}\cdots x_{i_{m-1}}^{k_{m-1}}\\
&=&\frac{m!}{2}\sum\limits_{\{i_{1},i_{2},\cdots,i_{m-1}\}\in E}x_{i_{1}}x_{i_{2}}\cdots x_{i_{m-1}}(x_{i_{1}}+x_{i_{2}}+\cdots +x_{i_{m-1}})\\
&=&\frac{m!}{2}[x_{1}^{2}\sum\limits_{\{1,i_{2},\cdots,i_{m-1}\}\in E, \atop {2\leq i_{2}<\cdots<i_{m-1}\leq n}}x_{i_{2}}x_{i_{3}}\cdots x_{i_{m-1}}
+\cdots\\ &+&x_{n}^{2}\sum\limits_{\{i_{2},\cdots,i_{m-1},n\}\in E,\atop {1\leq i_{2}<\cdots<i_{k}\leq n-1}} x_{i_{2}}\cdots x_{i_{m-1}}]
\end{eqnarray*}

By Maclaurin's inequality, we have
\begin{eqnarray*}
\frac{1}{\alpha(m-1)}\sum\limits_{e\in{E}\atop {|e|=m-1}}x_{m} ^{e}&=& \frac{1}{m-1}[x_{1}^{2}\sum\limits_{\{1,i_{2},i_{3},\cdots,i_{m-1}\}\in E,\atop {2\leq i_{2}<\cdots<i_{m-1}\leq n}}x_{i_{2}}x_{i_{3}}\cdots x_{i_{m-1}}+\cdots\\
&+&x_{n}^{2}\sum\limits_{\{i_{2},\cdots,i_{m-1},n \}\in E,\atop {1\leq i_{2}<\cdots<i_{k}\leq n-1}} x_{i_{2}}\cdots x_{i_{m-1}}]\\
&\leq&\frac{1}{m-1}\left(
  \begin{array}{ccc}
    n-1\\
    m-2
  \end{array}
\right)[x_{1}^{2}(\frac{\sum\limits_{i=2}^{n}x_{i}}{n-1})^{m-2}+\cdots+x_{n}^{2}(\frac{\sum\limits_{i=1}^{n-1}x_{i}}{n-1})^{m-2}].\\
\end{eqnarray*}
The equality holds if and only if $x_{1}=x_{2}=\cdots=x_{n}=\frac{1}{n}.$

Hence, the maximum is attained on $x_{1}=x_{2}=\cdots=x_{n}=\frac{1}{n}$. After setting $x_{1}=x_{2}=\cdots=x_{n}=\frac{1}{n}$, we get
\begin{eqnarray*}
\max_{x\in S}\frac{1}{\alpha(m-1)}\sum\limits_{e\in{E}\atop {|e|=m-1}}x_{m} ^{e}&=& (\frac{1}{n})^{m-1}\frac{1}{m-1}\left(
  \begin{array}{ccc}
    n-1\\
    m-2
  \end{array}
\right)\\
&=&(\frac{1}{n})^{m}\frac{n}{m-1}\left(
  \begin{array}{ccc}
    n-1\\
    m-2
  \end{array}
\right)\\&=&(\frac{1}{n})^{m}\left(
  \begin{array}{ccc}
    n\\
    m-1
  \end{array}
\right)
\end{eqnarray*}

Combining the above inequalities, if $H$ is a complete $\{m,m-1\}$-graph, then $\omega=n$ and
\begin{eqnarray*}
L(H)&\leq&(\frac{1}{n})^{m}\left(
  \begin{array}{ccc}
    n\\
    m
  \end{array}
\right)+(\frac{1}{n})^{m}\left(
  \begin{array}{ccc}
    n\\
    m-1
  \end{array}
\right)=(\frac{1}{n})^{m}\left(
  \begin{array}{ccc}
    n+1\\
    m
  \end{array}
\right).
\end{eqnarray*}
The equality holds if and only if $x_{1}=x_{2}=\cdots=x_{n}=\frac{1}{n}$.

This completes the proof.
\end{proof}

\begin{theorem}\label{upper bound on clique and spectral radius}
Let $H$ be an $\{m,m-1\}$-graph with clique number $\omega$. If either $H$ is a complete $\{m,m-1\}$-graph or there exists two nonadjacent vertices $i$ and $j$ such that $R(i)=R(j)$, then
$$\rho(H)\leq m({\frac{U}{\omega}})^{m}\left(
  \begin{array}{ccc}
    \omega+1\\
    m
  \end{array}
\right)$$
where $U$ is the sum of the entries of the principal eigenvector.
\end{theorem}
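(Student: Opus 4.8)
The plan is to route the estimate through the homogeneous polynomial $L(H)$, whose value is already pinned down by Theorem~\ref{general M-S} under exactly the present hypotheses, and to relate $\rho(H)$ to $L(H)$ through the principal eigenvector.

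First I would take $x$ to be the principal eigenvector of $H$, so that $\|x\|_m=1$ and, by Equation~(\ref{spectral radius equation}), $\rho(H)=\sum_{e\in E}x^\top\mathcal{A}(e)x$. For an edge $e$ of cardinality $s$, comparing the displayed expression for $x^\top\mathcal{A}(e)x$ with the definition of $x_m^e$ shows the identity $x^\top\mathcal{A}(e)x=\frac{s}{\alpha(s)}x_m^e$, since both are built from the same weighted sum $\sum_{k_1+\cdots+k_s=m,\,k_j\ge 1}\frac{m!}{k_1!\cdots k_s!}x_{l_1}^{k_1}\cdots x_{l_s}^{k_s}$, the former carrying the extra factor $\frac{s}{\alpha(s)}$ and the latter the factor $\frac{1}{\alpha(s)}$ in $L(H,x)$. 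Hence $\rho(H)=\sum_{e\in E}\frac{s}{\alpha(s)}x_m^e$.

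Next, because $H$ is an $\{m,m-1\}$-graph every edge satisfies $s\in\{m-1,m\}$, so $s\le m$; and since $x$ is nonnegative, each term $\frac{1}{\alpha(s)}x_m^e$ is nonnegative. Replacing the factor $s$ by its upper bound $m$ term by term therefore gives $\rho(H)\le m\sum_{e\in E}\frac{1}{\alpha(s)}x_m^e=m\,L(H,x)$, where $L(H,x)$ is the polynomial of Definition~\ref{lagrange x} evaluated at the eigenvector $x$.

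The remaining step is a change of normalization, which I expect to be the only delicate point, since $L(H)$ is a maximum over the simplex $S$ (where $\sum_i x_i=1$) whereas the eigenvector is normalized by $\|x\|_m=1$. Writing $U=\sum_i x_i$ and $y=x/U$, the vector $y$ lies in $S$, and as each $x_m^e$ is homogeneous of degree $m$ one has $L(H,x)=U^m L(H,y)\le U^m L(H)$. Finally Theorem~\ref{general M-S} evaluates $L(H)=(\frac{1}{\omega})^m\binom{\omega+1}{m}$, and stringing the inequalities together yields $\rho(H)\le m\,U^m(\frac{1}{\omega})^m\binom{\omega+1}{m}=m(\frac{U}{\omega})^m\binom{\omega+1}{m}$, which is the assertion.
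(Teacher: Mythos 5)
Your proposal is correct and follows essentially the same route as the paper's own proof: both write $\rho(H)=\sum_{e\in E}x^\top\mathcal{A}(e)x=\sum_{s}\frac{s}{\alpha(s)}\sum_{|e|=s}x^{e}_{m}$ for the principal eigenvector, bound the factor $s$ by $m$, rescale by $y=x/U$ to land in the simplex $S$, and invoke Theorem~\ref{general M-S} to evaluate the resulting maximum. The only difference is the immaterial order of the two commuting steps (you bound $s\le m$ before normalizing, the paper normalizes first), so this is the same argument.
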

\begin{proof} Let $x$ be the principal eigenvector of $H$. By Equation (\ref{spectral radius equation}), we have
\begin{eqnarray*}
\rho(H)&=&\sum\limits_{e\in E} x^\top \mathcal{A}(e)x=\sum\limits_{s\in \{m,m-1\}}\frac{s}{\alpha(s)}\sum\limits_{e\in E, \atop |e|=s}x^{e}_{m}.\end{eqnarray*}

Set $y=\frac{x}{U}$, where $U$ is the sum of the entries of $x$.
It is clearly that $y\in S$. Apply the Theorem \ref{general M-S}, then
\begin{eqnarray*}
\frac{\rho(H)}{U^{m}}&=&\sum\limits_{s\in \{m,m-1\}}\frac{s}{\alpha(s)}\sum\limits_{e\in E, \atop |e|=s}y^{e}_{m}\\
&\leq&\sum\limits_{s\in \{m,m-1\}}\frac{m}{\alpha(s)}\sum\limits_{e\in E, \atop |e|=s}y^{e}_{m}\\
&\leq& m({\frac{1}{\omega}})^{m}\left(
  \begin{array}{ccc}
    \omega+1\\
    m
  \end{array}
\right)
\end{eqnarray*}
Thus $$\rho(H)\leq m({\frac{U}{\omega}})^{m}\left(
  \begin{array}{ccc}
    \omega+1\\
    m
  \end{array}
\right)$$
Therefore the theorem follows.
\end{proof}
And further applying Lemma \ref{Cauchy¨CSchwarz inequality}, it is easy to check the following result.
\begin{corollary}
Let $H$ be an $\{m,m-1\}$-graph with clique number $\omega$. If either $H$ is a complete $\{m,m-1\}$-graph or there exists two nonadjacent vertices $i$ and $j$ such that $R(i)=R(j)$, then
$$\rho(H)\leq {\frac{mn^{m-1}}{\omega^{m}}}\left(
  \begin{array}{ccc}
    \omega+1\\
    m
  \end{array}
\right)$$
\end{corollary}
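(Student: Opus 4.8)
The plan is to derive this bound directly from Theorem~\ref{upper bound on clique and spectral radius} by estimating the quantity $U^{m}$, where $U=\sum_{i=1}^{n}x_{i}$ is the sum of the entries of the principal eigenvector $x$. Recall that $x$ is normalised so that $\|x\|_{m}=(\sum_{i=1}^{n}x_{i}^{m})^{1/m}=1$. The only difference between the two statements is the replacement of $U^{m}$ by $n^{m-1}$, so it suffices to prove $U^{m}\leq n^{m-1}$, i.e. $U\leq n^{(m-1)/m}$.

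To obtain this, I would apply the generalized Cauchy--Schwarz inequality of Lemma~\ref{Cauchy�CSchwarz inequality} with $k=m$, taking the first vector to be the principal eigenvector $x_{1}=x$ and the remaining $m-1$ vectors to be the all-ones vector $\mathbf{1}=(1,1,\ldots,1)$. The left-hand side of (\ref{inequality}) then collapses to $\sum_{i=1}^{n}x_{i}\cdot 1\cdots 1=U$, while the right-hand side becomes $\|x\|_{m}\,\|\mathbf{1}\|_{m}^{\,m-1}$. Since $\|x\|_{m}=1$ and $\|\mathbf{1}\|_{m}=(\sum_{i=1}^{n}1)^{1/m}=n^{1/m}$, this yields $U\leq n^{(m-1)/m}$ and hence $U^{m}\leq n^{m-1}$.

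Substituting $U^{m}\leq n^{m-1}$ into the bound $\rho(H)\leq m(U/\omega)^{m}\binom{\omega+1}{m}$ supplied by Theorem~\ref{upper bound on clique and spectral radius} immediately gives $\rho(H)\leq \frac{mn^{m-1}}{\omega^{m}}\binom{\omega+1}{m}$, which is the claimed inequality.

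There is no serious obstacle here: the corollary is a one-line consequence of the previous theorem once $U$ is controlled. The only point requiring care is the correct choice of vectors in Lemma~\ref{Cauchy�CSchwarz inequality} — packaging the eigenvector together with $m-1$ copies of the all-ones vector so that the product telescopes to the plain sum $U$ — together with keeping track of which norm is normalised to $1$ and which produces the factor $n^{1/m}$. One should also note that the hypotheses (``$H$ complete or two nonadjacent vertices with $R(i)=R(j)$'') are inherited verbatim from Theorem~\ref{upper bound on clique and spectral radius} and need no further verification.
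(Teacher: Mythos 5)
Your proposal is correct and follows essentially the same route as the paper: the paper likewise applies Lemma~\ref{Cauchy�CSchwarz inequality} to $\sum_{i=1}^{n} 1\cdot 1\cdots x_{i}$ (i.e.\ the eigenvector paired with $m-1$ all-ones vectors) to get $U\leq n^{\frac{m-1}{m}}$, hence $U^{m}\leq n^{m-1}$, and then substitutes into Theorem~\ref{upper bound on clique and spectral radius}. No gaps; your bookkeeping of the normalisation $\|x\|_{m}=1$ matches the paper's definition of the principal eigenvector.
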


\begin{proof} Let $x$ be the principal eigenvector of $H$ and $U$ is the sum of the entries of $x$. By Lemma \ref{Cauchy¨CSchwarz inequality}, we have
\begin{eqnarray*}
U&=&\sum\limits_{i=1}^{n}x_{i}=\sum\limits_{i=1}^{n}1\cdot 1\cdots x_{i}\leq (\sum\limits_{i=1}^{n}1)^{\frac{m-1}{m}}(\sum\limits_{i=1}^{n}{x_{i}^{m}})^{\frac{1}{m}}=n^{\frac{m-1}{m}}.\end{eqnarray*}
Obviously, $U^{m}\leq n^{m-1}.$ Combining with Theorem \ref{upper bound on clique and spectral radius}, the result follows.

\end{proof}

\vskip 3mm
\end{spacing}

\begin{thebibliography}{99}

\bibitem{Berge1973} C. Berge, Hypergraph: Combinatorics of Finite Sets, third edition, North-Holland, Amsterdam, 1973.

\bibitem{Bretto2013_2} A. Bretto, Hypergraph Theory: An Introduction, Springer, 2013.

\bibitem{Banerjee2017} A. Banerjee, A. Char, B. Mondal, Spectral of general hypergraphs, Linear Algebra Appl. 518 (2017) 14-30.

\bibitem{Cooper2012} J. Cooper, A. Dutle, Spectra of uniform hypergraphs, Linear Algebra Appl. 436 (2012) 3268-3292.

\bibitem{Fan2015}Y. Fan, Y. Tan, X. Peng, A. Liu, Maximizing spectral radii of uniform hypergraphs with few
edges, Discussiones Math. Graph Theory  36(2016) 845-856.

\bibitem{Frankl1998} P. Frankl, Z. F\"{u}redi, Extremal problems and the Lagrange function of hypergraphs, Bulletin Institute Math.
Academia Sinica 16(1988) 305-313.

\bibitem{Yuejian Peng2016} R. Gu, X. Li, Y. Peng, Y. Shi, Some Motzkin-Straus type results for non-uniform
hypergraphs, J. Comb. Optim. 31(2016) 223-238.

\bibitem{Hardy.Littlewood1988_4}G. Hardy, J. Littlewood, G. P\'{o}lya, Inequalities, 2nd edition, Cambridge University Press, 1988.

\bibitem{Keevash2013} D. Hefetz, P. Keevash, A hypergraph Tur\'{a}n theorem via lagrangians of intersecting families, J. Combin.
Theory Ser. A  120 (2013) 2020-2038.

\bibitem{Macdonald1963} J. MacDonald Jr., Problem E1643, Amer. Math. Monthly 70 (1963) 1099.

\bibitem{Motzkin1965} T. Motzkin, E. Straus, Maxima for graphs and a new proof of a theorem of Tur\'{a}n, Canad. J. Math. 17(1965) 533-540.

\bibitem{Mubayi2006} D. Mubayi, A hypergraph extension of Turans theorem, J. Combin. Theory Ser. B 96 (2006) 122-134.

\bibitem{Nikiforov2014} V. Nikiforov, Analytic methods for uniform hypergraphs, Linear Algebra Appl. 457 (2014) 455-535.

\bibitem{Papendieck2000} B. Papendieck, P. Recht, On maximal entries in the principal eigenvector of graphs, Linear Algebra Appl. 310 (2000) 129-138.

\bibitem{peng2016}  Y. Peng, H. Peng, Q. Tang, C. Zhao, An extension of Motzkin-Straus Thorem to non-uniform hypergraphs
and its applications, Discrete Appl. Math. 200 (2016) 170-175.

\bibitem{Qi2013} L. Qi, Symmetric nonnegative tensors and copositive tensors, Linear Algebra Appl. 439(2013) 228-238.

\bibitem{Lim2005_3} L. Lim, Singular values and eigenvalues of tensors: a variational approach, in: Proceedings
of the IEEE International Workshop on Computational Advances in Multi-Sensor Adaptive
Processing (CAMSAP 05) 1(2005) 129-132.

\bibitem{Liu2016} L. Liu, L. Kang, X. Yuan, On the principal eigenvectors of uniform hypergraphs, Linear Algebra Appl. 511(2016) 430-446.

\bibitem{Qi2005_2} L. Qi, Eigenvalues of a real supersymmetric tensor, J. Symb. Comput. 40(2005) 1302-1324.

\bibitem{Rota 2009} S. Rota Bul\`{o}, M. Pelillo, A generalization of the Motzkin-Straus theorem to hypergraphs, Optim. Lett. 3 (2009) 187-295.

\bibitem{Pelillo 2009} S. Rota Bul\`{o}, M. Pelillo, New bounds on the clique number of graphs based on spectral
hypergraph theory, Learning and Intelligent Optim.
 5851(2009) 45-58.

\bibitem{Talbot2002} J. Talbot, Lagrangians of hypergraphs, Combin. Prob. Comput. 11 (2002) 199-216.

\bibitem{Wilf1967} H. Wilf.,  The eigenvalues of a graph and its chromatic number, J. London
Math. Soc. 42(1967) 330-332.

\bibitem{Wilf1986} H. Wilf., Spectral bounds for the clique and independence numbers of graphs,
J. Comb. Theory Series B 40(1986) 113-117.

\bibitem{chang2013} G. Yi, A. Chang, The spectral bounds for the clique numbers of $r$-uniform hypergraphs,
Manuscript, Fuzhou University, 2013.

\bibitem{Kang2017} W. Zhang, L. Liu, L. Kang, Y. Bai, Some properties of the Spectral radius for general hypergraphs, Linear Algebra Appl.
 513(2017) 103-119.



\end{thebibliography}
\end{document}